\newtheorem{theorem}{\indent {\bf Theorem}}
\newtheorem{lemma}{\indent {\bf Lemma}}
\theoremstyle{remark}
\newtheorem{Corollary}{Corollary}
\begin{document}
\begin{center}
{\Large\textbf{Estimate of a Trigonometrical Sum Involving
Naturals with Binary Decompositions of a Special Kind}}
\end{center}
\bigskip
\begin{center}
{\large \textbf{K.M. \'{E}minyan}}
\end{center}
\bigskip

\begin{abstract}
Let $\mathbb{N}_0$ be a class of natural numbers whose binary
decompositions has even number of 1. We estimate of the sum
$\sum\limits_{n\in \mathbf{N}_0,n\le X}\exp(2\pi i \alpha n^2)$.
 \end{abstract}

\textbf{Key words:} binary decomposition, estimate of a
trigonometrical sum
\bigskip
\begin{center}
\large 1. Introduction
\end{center}

Consider the binary decomposition of a positive integer $n$:
$$
n=\sum_{k=0}^\infty\varepsilon_k2^k,
$$
where $\varepsilon_k=0,1$ and $k=0,1,\ldots$

We split the set of positive integers into two nonintersecting
classes as follows:
$$
\mathbb{N}_0=\{n\in \mathbb{N},\quad
\sum_{k=0}^\infty\varepsilon_k\equiv 0\pmod 2\},\quad
\mathbb{N}_1=\{n\in \mathbb{N},\quad
\sum_{k=0}^\infty\varepsilon_k\equiv 1\pmod 2\}.
$$

In 1968, A.O. Gel'fond \cite{G} obtained the following theorem:
\textit{for the number of integers $n$, $n\le X$, satisfying the
conditions $n\equiv l\pmod m$, $n\in \mathbf{N}_j$ ($j=0,1$), the
following asymptotic formula is valid:}
\begin{equation}\label{AOG}
    T_j(X,l,m)=\frac{X}{2m}+O(X^\lambda),
\end{equation}
where $m$, $l$ are any naturals and $\lambda=\frac{\ln 3}{\ln
4}=0,7924818\ldots$

Suppose that
$$
\varepsilon(n)=\left\{%
\begin{array}{ll}
    1 & \hbox{for $n\in \mathbf{N}_0$,} \\
    -1 & \hbox{otherwise.} \\
\end{array}%
\right.
$$

The proof of formula (\ref{AOG}) is based on the estimate
\begin{equation*}\label{AOG1}
    |S(\alpha)|\ll X^\lambda
\end{equation*}
of the trigonometrical sum
$$
S(\alpha)=\sum_{n\le X}\varepsilon(n)e^{2\pi i \alpha n},
$$
which is valid for any real values of $\alpha$.

Note that $S(\alpha)$ is a linear sum.

In present paper we estimate the quadratic sum
$$
S_0(\alpha)=\sum_{n\le X}\varepsilon(n)e^{2\pi i \alpha n^2}.
$$

Sum of this type, as far as we know, isn't yet been considered in
mathematical literature.

Our main result is the following theorem.

\begin{theorem}\label{T1}
Suppose that $X>2$, $\alpha\in \mathbb{R}$. Then for any $c>0$
there exist positive numbers $A$ and $B$ such that if
$$
\alpha=\frac{a}{q}+\frac{\theta}{q^2},\quad (a,q)=1,\quad
|\theta|<1,\quad \log^AX<q\le X\log^{-B}X,
$$
then the inequality
$$
|S_0(\alpha)|\ll X\log^{-c}X
$$
holds.
\end{theorem}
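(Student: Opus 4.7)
The plan is to reduce the quadratic exponential sum $S_0(\alpha)$ to linear ones in the spirit of Weyl, and then to apply a Gel'fond-type estimate suitably modified to handle the shifted weight that appears after differencing. Writing $f(n)=\varepsilon(n)e^{2\pi i\alpha n^2}$ and applying van der Corput's inequality with a shift parameter $H\le X$, one obtains
\[
|S_0(\alpha)|^2\ll\frac{X^2}{H}+\frac{X}{H}\sum_{1\le h\le H}\Bigl|\sum_{n\le X-h}\varepsilon(n)\varepsilon(n+h)e^{2\pi i\beta_h n}\Bigr|,
\]
with $\beta_h=2\alpha h$. Call the inner sum $T_h$; the rest of the argument is the estimation of $T_h$ on average over $h$.

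To bound $T_h$, I would use a block decomposition in the spirit of Gel'fond. Fix $N=2^K$ with $N>h$ and write $n=Nn_1+n_0$, $0\le n_0<N$. Since the binary expansions of $Nn_1$ and $n_0$ are disjoint, $\varepsilon(n)=\varepsilon(n_0)\varepsilon(n_1)$. In the carry-free regime $n_0<N-h$ the addition $n+h$ produces no carry into the high block, so $\varepsilon(n+h)=\varepsilon(n_0+h)\varepsilon(n_1)$ and $\varepsilon(n)\varepsilon(n+h)=\varepsilon(n_0)\varepsilon(n_0+h)$. The sum then factors as
\[
\Bigl(\sum_{n_0<N-h}\varepsilon(n_0)\varepsilon(n_0+h)e^{2\pi i\beta_h n_0}\Bigr)\Bigl(\sum_{n_1}e^{2\pi i\beta_h Nn_1}\Bigr),
\]
in which the $n_1$-sum is a geometric progression of length $\asymp X/N$ controlled by $\min(X/N,\|\beta_h N\|^{-1})$. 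The carry regime $n_0\ge N-h$ contributes a trivial bound of size $Xh/N$ per $h$, which is negligible provided $N$ is taken somewhat larger than $H$.

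The final step is to insert the bound for $T_h$ into the van der Corput estimate and sum over $h\le H$. Using the Diophantine hypothesis on $\alpha$, the quantity $\beta_h N=2\alpha hN$ admits, for most $h\le H$, a rational approximation of denominator of order $q$, and a standard lemma on the sum $\sum_{h\le H}\min(X/N,\|2\alpha hN\|^{-1})$ furnishes the required cancellation. With $H$ and $N$ chosen in the window allowed by $q$ (the natural sizes are $H\approx q$ and $N\approx X/q$, modulated by logarithmic factors), one arrives at $|S_0(\alpha)|^2\ll X^2\log^{-2c}X$, hence Theorem~\ref{T1}, for suitable $A$ and $B$.

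The principal obstacle I foresee is that $\varepsilon(n)\varepsilon(n+h)$ is not a multiplicative function of the binary digits of $n$, so the Fourier factorization behind Gel'fond's original estimate is not directly available. The carry-free block decomposition sketched above circumvents this, but turning a mere power-of-$\log$ saving into the stated $\log^{-c}X$ requires a careful coupling of the block size $N$, the shift range $H$, and the Diophantine quality of $\alpha$. This balancing act is what dictates the admissible window $\log^A X<q\le X\log^{-B}X$ and will form the technical heart of the proof.
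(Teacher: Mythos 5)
Your reduction via van der Corput and the carry-free block factorization of $\varepsilon(n)\varepsilon(n+h)$ is fine as far as it goes, but the heart of your argument --- the claim that the Diophantine hypothesis on $\alpha$ controls the frequencies $2\alpha hN$ with $N\approx X/q$ --- does not hold, and this is a genuine gap. From $\alpha=\frac aq+\frac{\theta}{q^2}$ you get $2\alpha hN=\frac{2ahN}{q}+\frac{2hN\theta}{q^2}$, and once $N$ is of polynomial size in $X$ the perturbation $2hN\theta/q^2$ is no longer small: in the hardest range $q\approx\log^AX$ it is of size up to $X/q^2$, i.e.\ enormous, so $2\alpha N$ need not be close to any rational with denominator comparable to $q$, and $\parallel 2\alpha hN\parallel$ may be tiny for every $h\le H$. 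The ``standard lemma'' for $\sum_{h\le H}\min(X/N,\parallel\gamma h\parallel^{-1})$ requires a rational approximation of $\gamma=2\alpha N$ with a denominator in a suitable range, and nothing in the hypotheses supplies one; Dirichlet gives some denominator, but it is unrelated to $q$ and can ruin the bound. Since your $n_0$-sum is estimated trivially (you extract no cancellation from the digit structure itself), the entire saving rests on this uncontrolled geometric series, so the balancing ``$H\approx q$, $N\approx X/q$'' cannot be carried out as described.

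The paper avoids exactly this trap by never dilating $\alpha$ beyond a power of $\log X$. It takes the van der Corput shift range $Q=\log^{2c}X$, then (Lemma \ref{l2}) reduces the correlation $S(X,h,\beta)$ by a digit-by-digit recursion based on $\varepsilon(2n)=\varepsilon(n)$, $\varepsilon(2n+1)=-\varepsilon(n)$: after $N+1\approx\log_2h$ steps the shift becomes $1$ or $2$, at a multiplicative cost $h^{\log_23}$ and with the frequency multiplied only by $2^{N+1}\ll h$. Then (Lemma \ref{l3}) the identity $\varepsilon(2m)\varepsilon(2m+1)=-1$ converts the shift-$1$ and shift-$2$ correlations into honest linear sums $\sum_{n\le Y2^{-j}}e(2^j\gamma n)$ with $j\le j_0=O(\log\log X\cdot c)$ --- this is where the actual cancellation from the binary structure enters, which your scheme never uses. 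The total multiplier $2^{N+j+2}h$ is then below $\frac1{10}\log^AX<q/10$, so $\parallel 2^j\beta_{N+1}\parallel\ge 0.9/q$ and each linear sum is $\ll q\le X\log^{-B}X$; that is how the window $\log^AX<q\le X\log^{-B}X$ really arises, rather than from a tradeoff between a block size $N\approx X/q$ and a shift range $H\approx q$. To salvage your route you would need either genuine cancellation in the $n_0$-correlation sums uniformly in $h$, or control of $\parallel 2\alpha hN\parallel$ that the hypothesis simply does not provide.
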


Proof of the theorem 1 is based on the following lemmas.
\bigskip
\begin{center}
{\large 2. Lemmas}
\end{center}

\begin{lemma}\label{l1}
Suppose that $Q\in \mathbb{N}$, $Q\le X$. Then the following
estimate holds:
$$
S_0(\alpha)\ll \frac{X}{\sqrt{Q}}+\{\frac{X}{Q}\sum_{h=1}^{Q-1}
|S(X-h,h,2h\alpha)|\}^{1/2},
$$
where $S(Y,h,\beta)=\sum\limits_{n\le
Y}\varepsilon(n)\varepsilon(n+h)e(\beta n)$, $e(x)=\exp\{2\pi i
x\}$.
\end{lemma}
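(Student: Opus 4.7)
The approach is the classical Weyl--van der Corput shift-and-Cauchy-Schwarz trick, which linearises a quadratic exponential phase at the cost of introducing a family of lower-order sums indexed by a shift parameter $h$.

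Concretely, I would extend $a_n=\varepsilon(n)e(\alpha n^{2})$ by zero outside $[1,X]$ and write
$$
Q\,S_{0}(\alpha)=\sum_{n}\sum_{q=1}^{Q}a_{n+q},
$$
where $n$ effectively runs over a range of length at most $X+Q\ll X$. Cauchy--Schwarz applied to the outer $n$-sum gives
$$
Q^{2}|S_{0}(\alpha)|^{2}\ll X\sum_{n}\Bigl|\sum_{q=1}^{Q}a_{n+q}\Bigr|^{2},
$$
and expanding the inner square, then setting $h=q_{2}-q_{1}$, converts the right-hand side into
$$
X\sum_{|h|<Q}(Q-|h|)\sum_{n}a_{n}\overline{a_{n+h}}.
$$

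The diagonal term $h=0$ contributes at most $QX$, which produces the first summand $X/\sqrt{Q}$ after dividing by $Q^{2}$ and taking square roots. For $h\neq 0$ the key computation is
$$
a_{n}\overline{a_{n+h}}=\varepsilon(n)\varepsilon(n+h)\,e(-2h\alpha n-\alpha h^{2}),
$$
so the quadratic phase collapses into a linear phase $-2h\alpha n$ together with a scalar factor $e(-\alpha h^{2})$ of modulus one. Since $\varepsilon(n+h)$ requires $n+h\le X$, the inner sum in absolute value is exactly $|S(X-h,h,2h\alpha)|$. Bounding $Q-|h|\le Q$, exploiting the conjugate symmetry between $\pm h$ to collapse the two halves of the $h$-sum, and finally taking square roots, yields the inequality stated in the lemma.

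There is no substantive obstacle: the argument is completely mechanical once the Weyl--van der Corput move is made, the identity $(n+h)^{2}-n^{2}=2nh+h^{2}$ being precisely what allows the quadratic exponential to be traded for the linear sum $S(Y,h,\beta)$. The only care needed is in the bookkeeping of the truncation (the upper limit $X-h$ in $S(X-h,h,2h\alpha)$ reflects the constraint $n+h\le X$) and in combining positive and negative $h$; both of these cost only absolute constants that are absorbed into the $\ll$ notation.
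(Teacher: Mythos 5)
Your argument is correct and is precisely the standard Weyl--van der Corput shift-and-Cauchy--Schwarz proof that the paper itself invokes (it states that the proof "coincides with the proof of a well known lemma in van der Corput's method"), with the differencing identity $(n+h)^2-n^2=2nh+h^2$ turning the quadratic phase into the linear sums $S(X-h,h,2h\alpha)$ exactly as intended. The bookkeeping of the diagonal term, the truncation at $X-h$, and the $\pm h$ symmetry is all handled correctly, so nothing further is needed.
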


The proof of this lemma coincides with the proof of a well known
lemma in van der Corput's method.

\begin{lemma}\label{l2}
Suppose that $\beta\in \mathbb{R}$, $h\in \mathbb{N}$,
$2<h\le\log^{c_1}X$, where $c_1>0$ is a constant. Let $N+1$ be a
number of binary digits of $h$, $X_{N+1}=X 2^{-N-1}$,
$\beta_{N+1}=2^{N+1}\beta$.

Then the inequality
$$
|S(X,h,\beta)|\ll h^{\log_23}|S(X_{N+1},1,\beta_{N+1})|+
h^{\log_23}|S(X_{N+1},2,\beta_{N+1})|+h^{\log_23}
$$
holds.
\end{lemma}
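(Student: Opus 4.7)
The plan is to iteratively halve $h$ by splitting $S(X,h,\beta)$ according to the parity of the summation index $n$, exploiting the identities $\varepsilon(2m)=\varepsilon(m)$ and $\varepsilon(2m+1)=-\varepsilon(m)$. Writing $n=2m$ or $n=2m+1$ and doing a short case analysis on the parity of $h$, I would first establish that for $h=2h'$
$$S(X,h,\beta)=(1+e(\beta))\,S(\lfloor X/2\rfloor,h',2\beta)+O(1),$$
while for $h=2h'+1$
$$S(X,h,\beta)=-S(\lfloor X/2\rfloor,h',2\beta)-e(\beta)\,S(\lfloor(X-1)/2\rfloor,h'+1,2\beta)+O(1).$$
Thus a single reduction step maps $(X,h,\beta)\mapsto(X/2,h'',2\beta)$ with $h''\in\{\lfloor h/2\rfloor,\lceil h/2\rceil\}$, producing at most two descendant sums of the same form plus an $O(1)$ boundary contribution.

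I would then iterate this reduction $N+1$ times, so that $X$ becomes $X_{N+1}=X/2^{N+1}$ and $\beta$ becomes $\beta_{N+1}=2^{N+1}\beta$. Because $h$ has exactly $N+1$ binary digits, the bound $h_{k+1}\le\lceil h_k/2\rceil$ forces the terminal value into the set $\{1,2\}$, with any branch that would reach $h=0$ stopped one step earlier and the resulting trivial geometric sum absorbed into the error.

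To control the accumulated coefficient, I would view the whole expansion as a tree of depth $N+1$ in which every internal node contributes at most three items: two descendant sums of the same form and a single $O(1)$ error. This yields a uniform bound of $3^{N+1}=h^{\log_2 3}$ on the number of surviving sums (hence on the coefficient in front of $|S(X_{N+1},1,\beta_{N+1})|$ and $|S(X_{N+1},2,\beta_{N+1})|$) and also on the accumulated boundary error, giving the stated inequality.

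The main obstacle will be the careful bookkeeping of the boundary shifts---especially the discrepancy $X\mapsto X-1$ in the odd case, compounded over $N+1$ levels---and the verification that the recursion terminates cleanly with $h\in\{1,2\}$ rather than spilling over to $h=0$, where the corresponding sum $\sum e(\beta_{N+1}n)$ is no longer uniformly bounded. The overall structure parallels the Gel'fond recursion for the linear sum $S(\alpha)=\sum_n\varepsilon(n)e(\alpha n)$.
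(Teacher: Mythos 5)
Your plan follows the paper's own route: the same two parity identities $S(X,2h',\beta)=(1+e(\beta))S(X2^{-1},h',2\beta)+O(1)$ and $S(X,2h'+1,\beta)=-S(X2^{-1},h',2\beta)-e(\beta)S(X2^{-1},h'+1,2\beta)+O(1)$, iterated along the binary expansion of $h$. But the two points you defer are exactly where the work lies, and your sketched resolutions of both are not sound. First, the coefficient bookkeeping: "at most $3^{N+1}$ surviving sums, hence the coefficient in front of the terminal sums is at most $3^{N+1}$" does not follow, because in the even case the single descendant enters with coefficient $1+e(\beta)$, of modulus up to $2$; a leaf coefficient is a product of such factors, so counting leaves does not bound it, and the naive "number of leaves times largest leaf coefficient" gives $2^{N+1}\cdot 2^{N+1}=4^{N+1}\asymp h^{2}$, which is useless. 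What actually saves the argument is that each node either keeps one child with weight factor at most $2$ (even shift) or splits into two children with unit factors (odd shift), so the total sum of absolute values of the coefficients at a given level at most doubles per step (giving even $O(2^{N+1})=O(h)$, sharper than $h^{\log_2 3}$); equivalently, the paper tracks the pair $x_jS(X_j,h_j,\beta_j)+y_jS(X_j,h_j+1,\beta_j)$ with explicit recursions for $(x_j,y_j)$ and proves $|x_j|,|y_j|\le 3^{j}$ by induction. Some such explicit weight argument must replace your cardinality count.

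Second, the endpoint: your fallback that a branch reaching shift $0$ can be "stopped one step earlier and the resulting trivial geometric sum absorbed into the error" is wrong. The shift-$0$ sum is $S(Y,0,\gamma)=\sum_{n\le Y}e(\gamma n)$, of size up to $Y\asymp X2^{-N-1}$, which cannot be absorbed into an error of size $h^{\log_2 3}$ (the bound $\ll 1/\parallel\gamma\parallel$ is only exploited later, in the proof of the theorem, not inside this lemma). The argument works only because shift $0$ never occurs: one checks that the shifts present at level $j$ always lie in the two-element set $\{\lfloor h2^{-j}\rfloor,\ \lfloor h2^{-j}\rfloor+1\}$ (an even shift $2h''$ yields $h''$; an odd shift $2h''+1$ yields $h''$ and $h''+1$), and one stops the iteration exactly when the smaller member equals $1$, so the terminal shifts are $1$ and $2$. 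This closure of the pair $\{h_j,h_j+1\}$ under the reduction is precisely why the paper carries the two-term linear combination rather than a free tree, and it is the statement you still need to prove; once you add it, together with the corrected coefficient bound, your argument closes and coincides with the paper's.
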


\begin{proof}[Proof]
Let $h=1w_Nw_{N-1}\ldots w_1w_0$ be a binary expansion of $h$,
$w_j=0,1$ ($j=0,1,\ldots,N$).

Define numbers $h_j$ with the following binary expansions:
$$
h_0=h=1w_Nw_{N-1}\ldots w_1w_0,\ h_1=1w_Nw_{N-1}\ldots w_1,\
h_2=1w_Nw_{N-1}\ldots w_2,\ldots ,h_N=1w_N.
$$

Define numbers $s_j$ as follows: $s_j=1-2w_j$ ($j=0,\ldots ,N$).

 Grouping summands over even and over odd $n$ and using
obvious formulae $\varepsilon(2n)=\varepsilon(n)$,
$\varepsilon(2n+1)=-\varepsilon(n)$ we have the following
equalities

\begin{equation}\label{f1}
S(X,2h_1,\beta)=(1+e(\beta))S(X2^{-1},h_1,2\beta)+O(1),
\end{equation}
\begin{equation}\label{f2}
S(X,2h_1+1,\beta)=-S(X2^{-1},h_1,2\beta)-e(\beta)S(X2^{-1},h_1+1,2\beta)+O(1).
\end{equation}

Consider the linear combination
$$
x_jS(X_j,h_j,\beta_j)+y_jS(X_j,h_j+1,\beta_j),
$$
where $0\le j\le N$, $X_j=X2^{-j}$, $\beta_j=2^j\beta$.  By
(\ref{f1})--(\ref{f2}) we have
$$
x_jS(X_j,h_j,\beta_j)+y_jS(X_j,h_j+1,\beta_j)=
x_{j+1}S(X_{j+1},h_{j+1},\beta_{j+1})+
y_{j+1}S(X_{j+1},h_{j+1}+1,\beta_{j+1})+
$$
$$
+O(|x_j|+|y_j|),
$$
where
\begin{equation}\label{f3}
x_{j+1}=(s_j+\frac{s_j+1}{2}e(\beta_j))x_j-\frac{s_j+1}{2}y_j,
\end{equation}
\begin{equation}\label{f4}
y_{j+1}=\frac{s_j-1}{2}e(\beta_j))x_j-(\frac{s_j-1}{2}+s_je(\beta_j))
y_j.
\end{equation}

Define $x_0$ and $y_0$
$$
(x_0,y_0)=\left\{%
\begin{array}{ll}
    (1,0), & \hbox{if $h$ is even,} \\
    (0,1), & \hbox{if $h$ is odd.} \\
\end{array}%
\right.
$$

Then we have
$$
S(X,h,\beta)=x_{N+1}S(X_{N+1},1,\beta_{N+1})+
y_{N+1}S(X_{N+1},2,\beta_{N+1})+O(\sum_{j=0}^N(|x_j|+|y_j|).
$$

From the definition of $x_0$ and $y_0$ and from (\ref{f3}),
(\ref{f4}) it follows by induction that
$$
|x_j|\le 3^j,\quad |y_j|\le 3^j
$$
for any $j\ge 0$. Thus we have
$$
|S(X,h,\beta)|\ll h^{\log_23}|S(X_{N+1},1,\beta_{N+1})|+
h^{\log_23}|S(X_{N+1},2,\beta_{N+1})|+h^{\log_23}.
$$
\end{proof}

\begin{lemma}\label{l3}
Suppose that $\gamma\in \mathbb{R}$, $j_0\ge 1$, $Y>2$. Then the
inequality
$$
|S(Y,1,\gamma)|+|S(Y,2,\gamma)|\ll \log Y\sum_{j=1}^{j_0}
|\sum_{n\le Y2^{-j}}e(2^j\gamma n)|+Y2^{-j_0}+\log Y.
$$
\end{lemma}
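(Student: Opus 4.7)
The plan is to split each of $S(Y,1,\gamma)$ and $S(Y,2,\gamma)$ according to the parity of the summation variable $n$, use the basic identities $\varepsilon(2m)=\varepsilon(m)$ and $\varepsilon(2m+1)=-\varepsilon(m)$, and thereby reduce both sums to geometric sums plus a copy of $S(\cdot,1,\cdot)$ of halved length and doubled frequency. Iterating $j_0$ times will produce the stated inequality.

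First I would compute $\varepsilon(n)\varepsilon(n+1)$ on even and odd $n$: when $n=2m$ the product equals $\varepsilon(m)\cdot(-\varepsilon(m))=-1$, contributing $-\sum_{m\le Y/2}e(2\gamma m)$; when $n=2m+1$ it equals $-\varepsilon(m)\varepsilon(m+1)$, contributing $-e(\gamma)\,S(Y/2,1,2\gamma)$ up to an $O(1)$ endpoint adjustment. This yields the one-step recursion
$$
|S(Y,1,\gamma)|\le\Bigl|\sum_{n\le Y/2}e(2\gamma n)\Bigr|+|S(Y/2,1,2\gamma)|+O(1).
$$
A parallel calculation for $h=2$ shows that in \emph{both} parity cases $\varepsilon(n)\varepsilon(n+2)=\varepsilon(\lfloor n/2\rfloor)\varepsilon(\lfloor n/2\rfloor+1)$, so the two halves combine cleanly into
$$
|S(Y,2,\gamma)|\le 2\,|S(Y/2,1,2\gamma)|+O(1).
$$

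Next I would iterate the first recursion $j_0$ times. At step $i$ the term $S(Y2^{-i},1,2^i\gamma)$ is replaced by a fresh geometric sum $\sum_{n\le Y2^{-i-1}}e(2^{i+1}\gamma n)$ plus $S(Y2^{-i-1},1,2^{i+1}\gamma)$ and an additive $O(1)$. Telescoping gives
$$
|S(Y,1,\gamma)|\ll\sum_{j=1}^{j_0}\Bigl|\sum_{n\le Y2^{-j}}e(2^j\gamma n)\Bigr|+|S(Y2^{-j_0},1,2^{j_0}\gamma)|+j_0,
$$
and the residual $S$-term is bounded trivially by $Y2^{-j_0}$; without loss of generality $j_0\ll\log Y$ (otherwise the right-hand side already exceeds the trivial bound $Y$), so the additive $j_0$ is absorbed into $\log Y$. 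Substituting this estimate into the one-step bound for $|S(Y,2,\gamma)|$ and summing yields the claimed inequality.

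There is no substantive obstacle: the argument is mechanical parity bookkeeping. The only points requiring care are tracking the $O(1)$ endpoint terms accumulated at each halving of $Y$ and verifying the two parity identities for $\varepsilon(n)\varepsilon(n+1)$ and $\varepsilon(n)\varepsilon(n+2)$ that make the recursion close on $S(\cdot,1,\cdot)$ together with geometric sums (and not also on $S(\cdot,2,\cdot)$).
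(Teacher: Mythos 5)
Your proposal follows essentially the same route as the paper: split $S(Y,1,\gamma)$ and $S(Y,2,\gamma)$ over even and odd $n$, use $\varepsilon(2m)=\varepsilon(m)$, $\varepsilon(2m+1)=-\varepsilon(m)$ to get one-step halving recursions, iterate, and bound the residual sum trivially by its length. Your parity identities are correct; in fact your relation $S(Y,2,\gamma)=(1+e(\gamma))S(Y2^{-1},1,2\gamma)+O(1)$ is exactly the paper's formula (\ref{f1}) with $h_1=1$ (the identity displayed in the paper's own proof of this lemma, with second term $e(\gamma)S(Y2^{-1},2,2\gamma)$, couples the two sums and appears to be a slip in the shift parameter), and your uncoupled version even yields the lemma without the $\log Y$ factor in front of the sum, which is of course still an admissible upper bound. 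The one weak point is your justification for absorbing the additive $j_0$: the parenthetical claim that for $j_0\gg\log Y$ the right-hand side ``already exceeds the trivial bound $Y$'' is false, since all the geometric sums $\sum_{n\le Y2^{-j}}e(2^j\gamma n)$ may be small and $Y2^{-j_0}$ is then tiny, so the right-hand side can be of size $\log Y$ only. The correct and equally short argument is to note that an $O(1)$ endpoint loss is incurred only while the sums are nonempty, i.e.\ for at most $\log_2Y$ steps, so one truncates the iteration after $j_1=\min(j_0,\lceil\log_2Y\rceil)$ steps; the accumulated error is then $O(\log Y)$ and the residual term is at most $\max(Y2^{-j_0},1)\ll Y2^{-j_0}+\log Y$. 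With that one-line repair (and the trivial index bookkeeping when you substitute the $h=1$ bound into the $h=2$ recursion so that only $j\le j_0$ appears), your argument is complete and matches the paper's.
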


Proof follows immediately from the equalities
$$
S(Y,1,\gamma)=-\sum_{n\le Y2^{-1}}e(2\gamma
n)-e(\gamma)S(Y2^{-1},1,2\gamma)+O(1),
$$
$$
S(Y,2,\gamma)=S(Y2^{-1},1,2\gamma)+e(\gamma)S(Y2^{-1},2,2\gamma)
+O(1).
$$

\begin{center}
{\large 3. Proof of Theorem 1 }
\end{center}

First apply lemma \ref{l1} with $Q=\log^{2c}X$. Now it is
sufficient to estimate sums $S(X-h,h,2h\alpha)$ for $1\le h\le
Q-1$.

By lemma \ref{l2} we arrive to the inequality
$$
|S(X-h,h,\beta)|\ll h^{\log_23}|S(X_{N+1},1,\beta_{N+1})|+
h^{\log_23}|S(X_{N+1},2,\beta_{N+1})|+h^{\log_23}.
$$

Then by lemma \ref{l3} we have
$$
|S(X-h,h,2h\alpha)|\ll h^{\log_23}\log^2Xj_0\max_{1\le j\le
j_0}|\sum_{n\le
Y2^{-j}}e(2^j\beta_{N+1}n)|+Xh^{\log_23}2^{-j_0}+h^{\log_23}\log
X,
$$
where $j_0$ satisfy to inequalities
$$
Q^{\log_23}j_02^{-j_0}\le\log^{-2c}X<Q^{\log_23}(j_0-1)2^{-j_0+1}.
$$

Since
$$
|\sum_{n\le Y2^{-j}}e(2^j\beta_{N+1}n)|\ll \frac{1}{\parallel
2^j\beta_{N+1}\parallel},
$$
where $\parallel x\parallel =\min(\{x\},1-\{x\})$, we get the
inequality
$$
|S(X-h,h,2h\alpha)|\ll h^{\log_23}\log^2X\max_{1\le j\le
j_0}\frac{1}{\parallel 2^j\beta_{N+1}\parallel}+X\log^{-2c}X,
$$
which holds for any $h$, $1\le h\le Q-1$.

Let $q$ satisfies the inequality $q>\log^AX$, where $A>0$ is such
that $2^{N+j_0+2}h<\frac{1}{10}\log^AX$. Then we have
$$
\parallel 2^j\beta_{N+1}\parallel\ge\frac{1}{q}-
\frac{2^{N+j_0+2}h}{q^2}\ge\frac{0.9}{q},\quad \frac{1}{\parallel
2^j\beta_{N+1}\parallel}\le\frac{10q}{9}.
$$

Let $q$ also satisfies the inequality $q\le X\log^{-B}X$, where
$B>0$ is such that
$$
\frac{Q^{\log_23}j_0\log^2X}{\log^BX}\le\log^{-2c}X.
$$

Then for any $h$, $1\le h\le Q-1$, the inequality
$$
|S(X-h,h,2h\alpha)|\ll X\log^{-2c}
$$
holds, and so
$$
|S_0(\alpha)|\ll X\log^{-c}.
$$

\begin{Corollary}
Suppose that $X>2$,$\alpha\in \mathbb{R}$. Then for any $c>0$
there exist positive numbers $A$ and $B$ such that if
$$
\alpha=\frac{a}{q}+\frac{\theta}{q^2},\quad (a,q)=1,\quad
|\theta|<1,\quad \log^AX<q\le X\log^{-B}X,
$$
then the inequality
$$
|\sum_{n\le X,\ n\in \mathbb{N}_0}e(\alpha n^2)|\ll X\log^{-c}X
$$
holds.
\begin{proof}
By definition we have
$$
\sum_{n\le X,\ n\in \mathbb{N}_0}e(\alpha n^2)=\sum_{n\le X}\frac{1+\varepsilon(n)}{2}e(\alpha n^2)=\frac{1}{2}S_0(\alpha)+\frac{1}{2}S_1(\alpha),
$$
where $S_1(\alpha)=\sum\limits_{n\le X}e(\alpha n^2)$.

Sum $S_0(\alpha)$ is estimated in theorem 1. Estimate $S_1(\alpha)$ in a standard way:
$$
|S_1(\alpha)|^2\ll X+\sum_{|h|\le X}|\sum_{n\le X}e(2hn\alpha)|\ll X+\sum_{|h|\le 2X}\min(X,\frac{1}{\parallel\alpha h\parallel})\ll
$$
$$
\ll(\frac{X}{q}+1)(X+q\log q)\ll X^2\log^{-2c}X.
$$
\end{proof}

\end{Corollary}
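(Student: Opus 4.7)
The plan is to reduce the unsigned sum to two pieces: the sign-twisted quadratic sum $S_0(\alpha)$, which is already controlled by Theorem \ref{T1}, and a pure Weyl sum $S_1(\alpha)=\sum_{n\le X}e(\alpha n^2)$, which can be handled by the classical argument. Since $\mathbf{1}_{\mathbb{N}_0}(n)=\frac{1}{2}(1+\varepsilon(n))$, one immediately has
$$
\sum_{n\le X,\ n\in\mathbb{N}_0}e(\alpha n^2)=\frac{1}{2} S_0(\alpha)+\frac{1}{2} S_1(\alpha),
$$
so it suffices to bound each piece by $X\log^{-c}X$.

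For $S_0(\alpha)$ I would invoke Theorem \ref{T1} directly: the hypotheses on $\alpha$ in the Corollary match those of Theorem \ref{T1}, and with the constants $A,B$ supplied there we obtain $|S_0(\alpha)|\ll X\log^{-c}X$ with no further work.

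For $S_1(\alpha)$ the standard Weyl differencing (square, swap the order of summation, open the inner geometric progression) gives
$$
|S_1(\alpha)|^2\ll X+\sum_{1\le |h|\le 2X}\min\Bigl(X,\frac{1}{\parallel 2h\alpha\parallel}\Bigr),
$$
after which the classical lemma on sums of $\min(X,1/\parallel h\beta\parallel)$ under the approximation $\alpha=a/q+\theta/q^2$ with $(a,q)=1$ yields a bound of shape $(X/q+1)(X+q\log q)$. The one point requiring attention is the choice of the exponents: one must take $A\ge 2c$ (so that $X^2/q\le X^2\log^{-2c}X$) and $B\ge 1$ (so that $q\log q\ll X$ and $X/q+1\ll X/q$). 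This yields $|S_1(\alpha)|^2\ll X^2\log^{-2c}X$, hence $|S_1(\alpha)|\ll X\log^{-c}X$.

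The step I expect to require the most care is the coordination of the two pairs of constants $A,B$: the pair coming from Theorem \ref{T1} and the pair coming from the Weyl estimate of $S_1(\alpha)$. Taking the maximum of both pairs in each position produces valid constants for the Corollary, and the triangle inequality applied to the decomposition above then completes the proof.
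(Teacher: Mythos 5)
Your proposal is correct and follows essentially the same route as the paper: the decomposition $\mathbf{1}_{\mathbb{N}_0}=\frac{1}{2}(1+\varepsilon)$, Theorem \ref{T1} for $S_0(\alpha)$, and Weyl differencing plus the standard $\sum\min(X,1/\parallel h\alpha\parallel)$ lemma giving $(X/q+1)(X+q\log q)\ll X^2\log^{-2c}X$ for $S_1(\alpha)$. Your extra remarks on coordinating the constants $A,B$ are a harmless refinement of what the paper leaves implicit.
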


\renewcommand{\refname}{References}

{\large Moscow State University of Applied Biotechnologies}

E-mail address: eminyan@mail.ru


\begin{thebibliography}{99}
\bibitem{G}
{\emph{ A.O. Gel'fond } Acta Arith., \textbf{13}, pp. 259--265
(1968)}
\end{thebibliography}
\end{document}